\documentclass[12pt]{amsart}

\usepackage{amsmath,amssymb,amsthm}
\usepackage{booktabs}
\usepackage{microtype}
\usepackage{url}

\newtheorem{theorem}{Theorem}
\newtheorem{lemma}[theorem]{Lemma}

\newtheorem{corollary}[theorem]{Corollary}
\theoremstyle{definition}

\theoremstyle{remark}
\newtheorem{remark}[theorem]{Remark}

\title{The Collision Spectrum}

\author{Alexander S.\ Petty}
\email{alexander.petty@gmail.com}
\date{March 2026}
\subjclass[2020]{11A63, 11N05, 11M06, 11M20}

\begin{document}
\begin{abstract}
For a prime base $b$ and primitive odd Dirichlet
character $\chi$ modulo $b^2$, the collision
transform coefficient $\hat{S}^{\circ}(\chi)$ admits
an exact factorization:
\[
\hat{S}^{\circ}(\chi)
= -\frac{B_{1,\overline{\chi}} \cdot
\overline{S_G(\chi)}}{\phi(b^2)},
\]
where $B_{1,\overline{\chi}}$ is the generalized
first Bernoulli number and $S_G(\chi)$ is the
diagonal character sum. By the standard Bernoulli--$L$-value
formula, $|B_1| = (b/\pi)\, |L(1, \chi)|$, so the collision
invariant's Fourier spectrum encodes $L$-function
special values.

A Parseval identity gives an exact formula for the
weighted second moment
$\sum |L(1, \chi)|^2 \cdot |S_G(\chi)|^2$
in terms of the collision invariant's values on
the finite group. The digit function computes this
$L$-value moment exactly. Under a conditional zero-free hypothesis, the
triangle inequality yields a separate bound
connecting $L(1)$ to $L(s)$ for $s$ in the
critical strip.

At base~$5$, the factorization gives
$|\hat{S}^{\circ}| \propto |L(1)|^2$ exactly.
For quadratic characters in the family, the
decomposition specializes to class-number data.
\end{abstract}
\maketitle

% ============================================================
\section{Introduction}

The collision invariant $S_{\ell}$, introduced
in~\cite{paperA}, is a function on the finite group
$(\mathbb{Z}/m\mathbb{Z})^{\times}$ with
$m = b^{\ell+1}$. The collision transform
of~\cite{paperB} decomposes $S_{\ell}$ over Dirichlet
characters. The antisymmetry theorem restricts the
centered decomposition to odd characters, and the
convergence theorem proves the resulting prime
harmonic sum $F^{\circ}(1)$ is finite.

This paper identifies what the collision transform
encodes. The collision spectrum is not merely
correlated with $L$-function values. It is built
from them. The main result is the decomposition
theorem: each Fourier coefficient factors into a
generalized Bernoulli number (encoding $|L(1)|$)
and a diagonal character sum. The proof uses the
slice formula from~\cite{paperA}, the classical
Bernoulli identity for character sums over fractional
parts, and the vanishing of coset sums for primitive
characters.

% ============================================================
\section{The Decomposition Theorem}

\begin{theorem}[Decomposition]\label{thm:decompose}
Let $b$ be prime, $m = b^2$, and $\chi$ a primitive
odd character modulo~$m$. Then
\[
\hat{S}^{\circ}(\chi)
= -\frac{B_{1,\overline{\chi}} \cdot
\overline{S_G(\chi)}}{\phi(m)},
\]
where $B_{1,\overline{\chi}}
= (1/m) \sum_a a\, \overline{\chi}(a)$ and
$S_G(\chi)
= \sum_{n \in G} [\overline{\chi}(n{+}1)
- \overline{\chi}(n)]$,
and $G = \{n \in \{0, \ldots, m{-}1\} :
\lfloor n/b \rfloor = n \bmod b\}$ is the diagonal
set~\cite{paperA} (elements whose base-$b$ digits
coincide), with $|G| = b$.
Dirichlet characters are extended by $\chi(a) = 0$
for $\gcd(a, m) > 1$.
\end{theorem}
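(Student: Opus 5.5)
Both sides are finite sums over residues modulo $m=b^2$, so the plan is to write $\hat{S}^{\circ}(\chi)$ as an explicit character sum and then factor it. The definitions come from \cite{paperA,paperB}. The centered coefficient is $\hat{S}^{\circ}(\chi)=\frac{1}{\phi(m)}\sum_{a\in(\mathbb{Z}/m)^\times}S^{\circ}(a)\,\overline{\chi}(a)$, where $S^{\circ}$ is the centered collision invariant at $\ell=1$. First I would invoke the slice formula of \cite{paperA}. It should express $S_1(a)$ as a sum over the diagonal set $G$ of fractional-part terms of the form $\{a n/m\}$ and $\{a(n+1)/m\}$; more precisely, as a difference $\sum_{n\in G}\bigl(\{a(n+1)/m\}-\{an/m\}\bigr)$ up to centering constants. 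The centering is what removes the constant $\tfrac12$ from the sawtooth $\{x\}-\tfrac12$. Since $\chi$ is odd, and by the antisymmetry theorem of \cite{paperB}, any even or constant part is annihilated by $\overline{\chi}$. So I may freely replace $\{\cdot\}$ by the sawtooth $((\cdot))$.

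Second, I would interchange the sums over $a$ and $n$, which gives
\[
\hat{S}^{\circ}(\chi)=\frac{1}{\phi(m)}\sum_{n\in G}\Bigl[\sum_a \overline{\chi}(a)\Bigl\{\tfrac{a(n+1)}{m}\Bigr\}-\sum_a\overline{\chi}(a)\Bigl\{\tfrac{an}{m}\Bigr\}\Bigr]
\]
(with the sign fixed by the slice formula). The key identity is the classical one: for $\gcd(c,m)=1$, substituting $a\mapsto ac^{-1}$ gives
\[
\sum_a\overline{\chi}(a)\{ac/m\}=\chi(c)\sum_a\overline{\chi}(a)\,a/m=\chi(c)B_{1,\overline{\chi}}.
\]
Each inner sum therefore becomes $B_{1,\overline{\chi}}$ times $\chi(n+1)$ or $\chi(n)$. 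The conjugation convention in the statement, namely $\overline{S_G(\chi)}=\sum_{n\in G}[\chi(n+1)-\chi(n)]$, then matches term by term. The overall minus sign comes from the orientation of the difference in the slice formula.

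Third, and this is the step I expect to be the main obstacle, I must handle the terms with $\gcd(c,m)>1$, i.e.\ $b\mid n$ or $b\mid n+1$. In $G$ these are $n=0$, and the $n$ with $n\equiv -1 \pmod b$ (namely $n=(b-1)(b+1)$), where the fractional part $\{an/m\}$ depends only on $a$ modulo $m/\gcd(n,m)$. In that case $\sum_a \overline{\chi}(a)\{ac/m\}$ breaks into sums of $\overline{\chi}$ over cosets of the kernel of $(\mathbb{Z}/m)^\times\to(\mathbb{Z}/(m/d))^\times$. Those coset sums vanish because $\chi$ is primitive and so nontrivial on that kernel. Hence these terms contribute $0$, consistent with the convention $\chi(c)=0$. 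For $c=0$ the term is trivially zero. I would check carefully that the slice formula's boundary terms (for instance $n+1=m$) are covered by this analysis. Dividing by $\phi(m)$ then gives the stated formula.
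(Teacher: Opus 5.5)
\textbf{There is a genuine gap.} Your per-slice evaluation, $\sum_a\overline{\chi}(a)\{ca/m\}=\chi(c)B_{1,\overline{\chi}}$ via $a\mapsto c^{-1}a$, is exactly the paper's Lemma and Step~3. The rest of the argument, however, rests on a guessed form of the slice formula that is not the actual one, and the mechanism you use to discard everything else does not work.

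\textbf{The linear terms and the sign.} The slice formula is in floors and carries an extra term:
\[
S(a)=-1-\lfloor a/b\rfloor+\sum_{n\in G}\bigl(\lfloor (n{+}1)a/m\rfloor-\lfloor na/m\rfloor\bigr).
\]
Writing $\lfloor x\rfloor=x-\{x\}$, each of the $|G|=b$ slices carries a linear piece $a/m$. These total $a/b$, whose character sum is $bB_{1,\overline{\chi}}\neq 0$. Your fractional-part version tacitly assumes these linear pieces are absent. In fact they cancel only against the linear part of $-\lfloor a/b\rfloor$, which contributes $-bB_{1,\overline{\chi}}$ (the paper's Steps~2 and~4). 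Drop that term and you get $B_{1,\overline{\chi}}\bigl(b-\overline{S_G(\chi)}\bigr)$ instead of the theorem. The same conversion fixes the overall sign: each slice enters as $a/m-\bigl(\{(n{+}1)a/m\}-\{na/m\}\bigr)$. So the minus sign is a consequence of $\lfloor x\rfloor=x-\{x\}$, not an ``orientation'' you can leave open.

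\textbf{The residual terms and the centering.} After this cancellation, two pieces remain besides the slice differences: the term $-1+\{a/b\}$, and the centering, which subtracts the class mean $\overline{S}_R$ on $R=(a-1)\bmod b$. The centering is not a constant, and neither piece is even. The slice formula gives $S(a)+S(-a)=-1$, so both $\{a/b\}-\tfrac12$ and the class mean plus $\tfrac12$ are odd functions of $a$. For example, at $b=3$ the class means are $-2/3$ and $-1/3$ on $a\equiv 1,2\pmod 3$. Hence the oddness of $\chi$ and the antisymmetry theorem give no cancellation here.

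\textbf{What actually kills them is primitivity.} Both pieces depend only on $a\bmod b$, and
\[
\sum_{a\equiv k\,(b)}\overline{\chi}(a)=\overline{\chi}(k)\sum_j\overline{\chi}(1+jb)=0,
\]
since $\chi$ is nontrivial on $\{1+jb\}$. You do have this coset argument, but you spend it on the boundary slices $n=0$ and $n=m-1$, where it is unnecessary: there $\{0\}$ and $\{am/m\}$ vanish identically. It is needed instead for the $\{a/b\}$ term and for the centering, which are the paper's Steps~1 and~2.
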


\begin{proof}
Write $\phi = \phi(m)$. Expand
$\phi\, \hat{S}^{\circ} = \sum_a S^{\circ}(a)\,
\overline{\chi}(a)$ using the slice
formula~\cite{paperA}:
$S(a) = -1 - \lfloor a/b \rfloor
+ \sum_{n \in G} d_n(a)$
where $d_n(a) = \lfloor(n{+}1)a/m\rfloor
- \lfloor na/m\rfloor$.

\emph{Step~1} (centering vanishes).
The class mean $\overline{S}_{R}$ is constant on each
spectral class $R = (a{-}1) \bmod b$, and the sum of
$\overline{\chi}(a)$ over each class equals
$\overline{\chi}(k) \sum_{u \in 1+b\mathbb{Z}/b^2\mathbb{Z}}
\overline{\chi}(u) = 0$, because $\chi$
restricted to the subgroup $\{1{+}jb\}$ is
non-trivial for primitive $\chi$.

\emph{Step~2} (constant and floor terms).
$\sum_a (-1)\, \overline{\chi}(a) = 0$.
The fractional part $\{a/b\}$ depends only on
$a \bmod b$, so
\[
\sum_a \overline{\chi}(a)\, \{a/b\}
= \sum_{k=1}^{b-1} \frac{k}{b}
\sum_{a \equiv k \bmod b}
\overline{\chi}(a) = 0,
\]
since each coset sum vanishes (Step~1). Therefore
$-\sum \lfloor a/b \rfloor\, \overline{\chi}(a)
= -b\, B_{1,\overline{\chi}}$.

\emph{Step~3} (diagonal terms).
The endpoints $n = 0$ and $n = m{-}1$ contribute
nothing: $d_0(a) = 0$ (since $\lfloor a/m \rfloor
= 0$), and $d_{m-1}(a) = 1$ for all units, giving
$\sum_a \overline{\chi}(a) = 0$. For interior slices
($\gcd(n, m) = \gcd(n{+}1, m) = 1$),
Lemma~\ref{lem:bernoulli} gives
\[
\sum_a d_n(a)\, \overline{\chi}(a)
= [1 + \chi(n) - \chi(n{+}1)]\,
B_{1,\overline{\chi}}.
\]
Summing all $|G| = b$ slices:
$\sum_{n \in G} \sum_a d_n(a)\,
\overline{\chi}(a)
= B_1 [b - \overline{S_G(\chi)}]$.

\emph{Step~4} (combine).
$\phi\, \hat{S}^{\circ}
= 0 + (-b B_1) + B_1[b - \overline{S_G}] - 0
= -B_1 \cdot \overline{S_G}$.
\end{proof}

\begin{lemma}\label{lem:bernoulli}
For primitive $\chi$ modulo $m$ and $\gcd(n,m)=1$:
\[
\sum_a \overline{\chi}(a)\, \{na/m\}
= \chi(n)\, B_{1,\overline{\chi}}.
\]
\end{lemma}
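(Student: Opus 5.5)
The plan is a change of variables in the sum over $a$. Since $\gcd(n,m)=1$, the map $a \mapsto na \bmod m$ permutes the residues modulo $m$, and in particular it permutes the units. Write $r$ for the least nonnegative residue of $na$ modulo $m$. Then $\{na/m\} = r/m$, because $na - r$ is a multiple of $m$. We also have $a \equiv \overline{n}\, r \pmod m$, where $\overline{n}$ is the inverse of $n$ modulo $m$. This gives
\[
\overline{\chi}(a) = \overline{\chi}(\overline{n})\,\overline{\chi}(r) = \chi(n)\,\overline{\chi}(r),
\]
using that $\overline{\chi}(\overline{n}) = \chi(n)$ for a unit $n$ (unimodularity of character values).

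Substituting, the sum becomes
\[
\sum_a \overline{\chi}(a)\,\{na/m\} = \chi(n)\sum_{r} \overline{\chi}(r)\,\frac{r}{m} = \chi(n)\,B_{1,\overline{\chi}},
\]
by the definition $B_{1,\overline{\chi}} = (1/m)\sum_a a\,\overline{\chi}(a)$ stated in Theorem~\ref{thm:decompose}. The index $r$ runs over $\{0,\dots,m-1\}$, or over the units, since $\overline{\chi}$ vanishes on non-units by the stated convention. So the range of summation does not matter, provided $a$ ranges over a complete residue system (or the units) and $\{na/m\}$ depends only on $a \bmod m$. It does, since replacing $a$ by $a+m$ changes $na/m$ by the integer $n$.

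The only point needing care is bookkeeping rather than mathematics. I will fix $a \in \{0,\dots,m-1\}$ as the summation range, matching the definition of $B_{1,\overline{\chi}}$, so that the bijection $a \mapsto r$ is a genuine permutation of this set. Primitivity is not needed for this identity; only $\gcd(n,m)=1$ is used.

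I would also remark on how Step~3 uses the lemma. Writing $d_n(a) = (n+1)a/m - na/m - \{(n+1)a/m\} + \{na/m\} = a/m - \{(n+1)a/m\} + \{na/m\}$ and applying the lemma twice gives $[1 - \chi(n+1) + \chi(n)]\,B_{1,\overline{\chi}}$. This is consistent with the formula stated in that step.
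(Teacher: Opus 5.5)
Your proof is correct and is the same argument as the paper's: the paper substitutes $a \mapsto n^{-1}a$, which is your change of variables $r \equiv na \pmod m$ with $\overline{\chi}(a) = \chi(n)\,\overline{\chi}(r)$ and $\{na/m\} = r/m$. Your side remarks are also right: primitivity is not used, and the lemma yields the Step~3 formula $[1+\chi(n)-\chi(n+1)]\,B_{1,\overline{\chi}}$.
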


\begin{proof}
The substitution $a \mapsto n^{-1}a$ permutes the
units and gives
\[
\chi(n) \sum_a \overline{\chi}(a)\, \{a/m\}
= \chi(n)\, B_{1,\overline{\chi}},
\]
since $\{a/m\} = a/m$ for $1 \le a \le m{-}1$.
\end{proof}

All identities involving $F^{\circ}$ and
$P(s, \chi) = \sum_p \chi(p)/p^s$ are understood up
to the finite contribution of primes $p \le m$,
which is irrelevant for convergence.

% ============================================================
\section{The $L$-Encoding}

The Bernoulli number is the $L$-function in disguise.

\begin{corollary}\label{cor:encoding}
$|\hat{S}^{\circ}(\chi)|
= (b / \pi\phi)\, |L(1, \chi)| \cdot |S_G(\chi)|$.
\end{corollary}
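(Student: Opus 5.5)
The plan is to take absolute values in Theorem~\ref{thm:decompose} and then substitute the classical formula linking $B_{1,\overline{\chi}}$ to $L(1,\chi)$. By the theorem,
\[
|\hat{S}^{\circ}(\chi)| = \frac{|B_{1,\overline{\chi}}|\cdot|\overline{S_G(\chi)}|}{\phi(m)} = \frac{|B_{1,\overline{\chi}}|\cdot|S_G(\chi)|}{\phi(m)},
\]
since complex conjugation preserves modulus. It remains to show $|B_{1,\overline{\chi}}| = (b/\pi)\,|L(1,\chi)|$, with $m = b^2$.

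For this I would use the standard evaluation of $L(1,\psi)$ for a primitive odd character $\psi$ modulo $m$:
\[
L(1,\psi) = \frac{\pi i\,\tau(\psi)}{m}\, B_{1,\overline{\psi}}.
\]
Here $\tau(\psi)$ is the Gauss sum, and $|\tau(\psi)| = \sqrt{m}$ because $\psi$ is primitive. Both facts are classical. The route is: expand $\psi$ in additive characters via the Gauss sum, which is valid for primitive $\psi$. Then sum the series $\sum_n e(an/m)/n = -\log(1-e(a/m))$, take the odd part, and use
\[
\operatorname{Im}\bigl(-\log(1-e^{2\pi i a/m})\bigr) = \pi\bigl(\tfrac12 - \tfrac am\bigr)
\quad\text{for } 0<a<m.
\]
The constant $\tfrac12$ disappears because $\sum_a\overline{\psi}(a)=0$. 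Applying this with $\psi=\chi$ and taking moduli gives
\[
|L(1,\chi)| = \frac{\pi\sqrt{m}}{m}\,|B_{1,\overline{\chi}}| = \frac{\pi}{b}\,|B_{1,\overline{\chi}}|.
\]
Hence $|B_{1,\overline{\chi}}| = (b/\pi)|L(1,\chi)|$, which is exactly the relation quoted in the abstract. Substituting this into the first display gives the corollary.

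The only genuine content is the Bernoulli--$L$-value formula. The main point to watch there is the normalization: which of $\chi$ and $\overline{\chi}$ pairs with which, and the exact constant in front. Neither affects the modulus, because $|\tau(\chi)| = |\tau(\overline{\chi})| = \sqrt m$ and $|L(1,\overline{\chi})| = |L(1,\chi)|$ (the two are complex conjugates). The factor $\sqrt{m}=b$ is precisely where the assumption $m=b^2$ is used. I would cite the formula from the standard literature (Washington's text, Thm.~4.9 together with the odd-character evaluation) rather than re-derive it.
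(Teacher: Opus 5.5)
Your proposal is correct and follows the paper's proof. You take moduli in Theorem~\ref{thm:decompose} and insert the classical odd-character formula $L(1,\chi) = \pi i\,\tau(\chi)\,B_{1,\overline{\chi}}/m$ with $|\tau(\chi)| = \sqrt{m} = b$; the paper simply cites this from Davenport in the form $|B_{1,\overline{\chi}}| = (b/\pi)\,|L(1,\chi)|$. Your sketch of the derivation is accurate: the Gauss-sum expansion, the imaginary part $\pi(\tfrac12 - \tfrac{a}{m})$ of $-\log(1-e(a/m))$, and the vanishing of the constant term all check out, as does your normalization check.
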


\begin{proof}
By the generalized Bernoulli--$L$-value formula
for odd primitive characters~\cite{davenport},
$|B_{1,\overline{\chi}}| = (b/\pi)\, |L(1, \chi)|$.
\end{proof}

\begin{remark}[Connection to short partial sums]
Computation verifies that for all primitive odd
$\chi$ modulo~$b^2$ with $b \le 13$,
\[
|S_G(\chi)| = 2\,\Bigl|\sum_{k=1}^{b-1}
\overline{\chi}(k)\Bigr|.
\]
At base~$5$ (from the verified identity above),
the partial sum further satisfies
$|\sum_{k=1}^{4} \overline{\chi}(k)|
= (\sqrt{5}/2)\, |B_{1,\overline{\chi}}|$ for
all $8$ primitive odd characters modulo~$25$,
giving $|\hat{S}^{\circ}| \propto |L(1)|^2$.
A proof of the general identity is open.
\end{remark}

\begin{remark}
For quadratic odd primitive characters $\chi_D$ with
fundamental discriminant $D < -4$,
$|L(1, \chi_D)| = \pi h(D) / \sqrt{|D|}$.
When such characters appear among the primitive odd
characters modulo~$m$, the decomposition theorem
specializes to class-number data.
\end{remark}

% ============================================================
\section{Moment Bounds}

\begin{theorem}[Moment identity]\label{thm:moment}
For prime $b$ and $m = b^2$:
\[
\sum_{\substack{\chi \bmod m \\
\chi \text{ prim.\ odd}}}
|L(1, \chi)|^2 \cdot |S_G(\chi)|^2
= \frac{\pi^2 \phi(m)}{b^2}
\sum_{\substack{a \bmod m \\
\gcd(a,m)=1}} |S^{\circ}(a)|^2.
\]
\end{theorem}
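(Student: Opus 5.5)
The plan is to combine Corollary~\ref{cor:encoding} with Parseval's identity on the finite group $(\mathbb{Z}/m\mathbb{Z})^{\times}$. By Corollary~\ref{cor:encoding}, for each primitive odd $\chi$,
\[
|L(1,\chi)|^2\,|S_G(\chi)|^2 = \frac{\pi^2\phi^2}{b^2}\,|\hat{S}^{\circ}(\chi)|^2 ,
\]
with $\phi=\phi(m)$. Summing over primitive odd characters turns the left side of Theorem~\ref{thm:moment} into $(\pi^2\phi^2/b^2)\sum_{\chi\ \mathrm{prim.\,odd}} |\hat{S}^{\circ}(\chi)|^2$. It therefore suffices to show that this restricted spectral sum equals $(1/\phi)\sum_a |S^{\circ}(a)|^2$.

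For that I would use Parseval. With the normalization $\phi\,\hat{S}^{\circ}(\chi)=\sum_a S^{\circ}(a)\overline{\chi}(a)$ used in the proof of Theorem~\ref{thm:decompose}, orthogonality of characters gives
\[
\sum_{\chi \bmod m} |\hat{S}^{\circ}(\chi)|^2 = \frac{1}{\phi}\sum_{\gcd(a,m)=1} |S^{\circ}(a)|^2 .
\]
The remaining task is to show that $\hat{S}^{\circ}(\chi)=0$ for every character that is not primitive odd. Even characters are handled by the antisymmetry theorem of~\cite{paperB}, which states that the centered decomposition is supported on odd characters; equivalently, $S^{\circ}(-a)=-S^{\circ}(a)$.

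For non-primitive characters modulo $b^2$ (with $b$ prime), $\chi$ is induced from a character modulo $b$. So $\chi$ is constant on each coset $a+b\mathbb{Z}$ among units, and hence constant on each spectral class $R=(a-1)\bmod b$. The centering in $S^{\circ}$ subtracts the class mean $\overline{S}_R$ on each class. Therefore
\[
\sum_a S^{\circ}(a)\overline{\chi}(a)=\sum_R \overline{\chi}(R+1)\sum_{a\in R}\bigl(S(a)-\overline{S}_R\bigr)=0 .
\]
This is exactly the dual of Step~1 of Theorem~\ref{thm:decompose}.

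Combining the two steps, the full Parseval sum collapses to the primitive odd characters. Substituting back gives
\[
\frac{\pi^2\phi^2}{b^2}\cdot\frac{1}{\phi}\sum_a|S^{\circ}(a)|^2=\frac{\pi^2\phi(m)}{b^2}\sum_a |S^{\circ}(a)|^2 ,
\]
which is the claimed identity.

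The main obstacle is the support claim: that $\hat{S}^{\circ}$ vanishes on all characters outside the primitive odd family. The imprimitive case depends on the precise definition of the centering, namely that it subtracts the mean over residue classes mod $b$. If the spectral classes were defined differently, I would instead check directly that $S^{\circ}$ is orthogonal to every function of $a \bmod b$. Everything else is bookkeeping of the $\phi$ normalization together with the constant $b/\pi$ from Corollary~\ref{cor:encoding}.
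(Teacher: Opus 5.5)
Your proposal is correct and follows essentially the same route as the paper: Parseval on $(\mathbb{Z}/m\mathbb{Z})^{\times}$, the antisymmetry theorem to remove even characters, and constancy of imprimitive $\chi$ on residue classes mod $b$ against the mean-zero centering to remove imprimitive ones, followed by substitution of Corollary~\ref{cor:encoding} with the same $(\pi^2\phi^2/b^2)\cdot(1/\phi)$ bookkeeping. The paper also checks that $S_G(\chi)=0$ for imprimitive odd $\chi$, but your left-hand side is already restricted to primitive odd characters, so you do not need that fact.
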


\begin{proof}
Parseval's identity on
$(\mathbb{Z}/m\mathbb{Z})^{\times}$ gives
\[
\sum_{\chi} |\hat{S}^{\circ}(\chi)|^2
= \frac{1}{\phi} \sum_a |S^{\circ}(a)|^2.
\]
By the antisymmetry theorem~\cite{paperB},
$\hat{S}^{\circ}(\chi) = 0$ for even $\chi$.
For imprimitive odd $\chi$ induced from a character
$\psi$ modulo~$b$, both $S_G(\chi) = 0$ and
$\hat{S}^{\circ}(\chi) = 0$.

For $S_G$: since $G = \{r(b{+}1) : 0 \le r \le b{-}1\}$
and $\chi$ depends only on residues modulo~$b$,
we have $\overline{\chi}(r(b{+}1){+}1) = \overline{\psi}(r{+}1)$
and $\overline{\chi}(r(b{+}1)) = \overline{\psi}(r)$, so
$S_G(\chi) = \sum_{r=0}^{b-1}
[\overline{\psi}(r{+}1) - \overline{\psi}(r)] = 0$
(telescoping, using $\psi(0) = \psi(b) = 0$).

For $\hat{S}^{\circ}$: on each coset
$\{a : a \equiv k \pmod{b}\}$, the imprimitive
$\overline{\chi}$ is constant, while $S^{\circ}$ has
mean zero on that coset by definition of centering.
Each coset contributes zero, hence
$\hat{S}^{\circ}(\chi) = 0$. Only primitive odd characters
survive. Substituting
the $L$-encoding (Corollary~\ref{cor:encoding})
and rearranging gives the identity.
\end{proof}

The right side is computable: $S^{\circ}$ is explicitly
determined on the $\phi(m)$ units, and the weights
$|S_G(\chi)|^2$ on the left are determined by
the collision geometry.

\begin{remark}
At base~$5$, the identity
$|S_G| = (5\sqrt{5}/\pi)\, |L(1)|$ (verified for
all $8$ primitive odd characters) converts the
moment identity into $\sum |L(1, \chi)|^4
= c_5 \sum |S^{\circ}(a)|^2$: a fourth moment
of $L$-function special values, computable from
the collision invariant.
\end{remark}

\begin{corollary}[Conditional cross-moment]
\label{cor:cross}
For prime $b$ and $m = b^2$: if every primitive odd
$L$-function modulo $m$ satisfies
$L(s, \chi) \ne 0$ for
$\operatorname{Re}(s) > \sigma_0$, then for
real $s > \sigma_0$ the weighted moment
\[
\frac{1}{\phi(m)}
\sum_{\chi \text{ prim.\ odd}}
|B_{1,\overline{\chi}}| \cdot |S_G(\chi)|
\cdot |P_{>m}(s,\chi)|
\]
is bounded below by $|F^{\circ}_{>m}(s)|$,
where $P_{>m}(s,\chi) = \sum_{p > m} \chi(p)/p^s$
and $F^{\circ}_{>m}(s) = \sum_{p > m} S^{\circ}(p)/p^s$.
\end{corollary}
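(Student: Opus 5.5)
We expand $S^{\circ}$ into characters, interchange the prime sum with the finite character sum, and apply the triangle inequality term by term. The zero-free hypothesis is used only to justify the interchange.

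\emph{Step~1 (Fourier inversion).} By Fourier inversion on $(\mathbb{Z}/m\mathbb{Z})^{\times}$, for every unit $a$,
\[
S^{\circ}(a) = \sum_{\chi \bmod m} \hat{S}^{\circ}(\chi)\,\chi(a).
\]
The normalization matches the Parseval identity used in Theorem~\ref{thm:moment}, where $\phi\,\hat{S}^{\circ}(\chi) = \sum_a S^{\circ}(a)\overline{\chi}(a)$. The proof of Theorem~\ref{thm:moment} already shows that $\hat{S}^{\circ}(\chi)=0$ for even characters (antisymmetry) and for imprimitive odd characters. So only primitive odd $\chi$ survive, and for those Theorem~\ref{thm:decompose} gives
\[
\hat{S}^{\circ}(\chi) = -B_{1,\overline{\chi}}\,\overline{S_G(\chi)}/\phi(m).
\]

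\emph{Step~2 (prime sum).} Every prime $p>m$ is a unit modulo $m$, so the expansion applies at $a=p$. Multiplying by $p^{-s}$ and summing over $p>m$ gives
\[
F^{\circ}_{>m}(s) = -\frac{1}{\phi(m)}\sum_{\chi\ \text{prim.\ odd}} B_{1,\overline{\chi}}\,\overline{S_G(\chi)}\,P_{>m}(s,\chi).
\]
The character sum is finite, so this interchange is legitimate whenever each $P_{>m}(s,\chi)$ converges. I expect this convergence to be the main obstacle. For $s>1$ it is absolute convergence, so the identity is immediate. For $\sigma_0 < s \le 1$ I would use the hypothesis that $L(s,\chi)\neq 0$ for $\operatorname{Re}(s)>\sigma_0$. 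With this, $\log L(s,\chi)$ continues analytically to that half-plane, and $P(s,\chi) = \log L(s,\chi) - \sum_{k\ge 2}\frac{1}{k}P(ks,\chi^k)$, where the correction is harmless for $s>1/2$. By the standard explicit-formula argument, the zero-free hypothesis then gives $\sum_{p\le x}\chi(p)\log p = O(x^{\sigma_0+\varepsilon})$, using that $\chi$ is nonprincipal so there is no pole. Partial summation converts this into convergence of $P(s,\chi)$, and hence of $P_{>m}(s,\chi)$, for real $s>\sigma_0$. The convergence of $F^{\circ}_{>m}(s)$ then follows from the finite identity. Consistent with the paper's convention, the finitely many primes $p\le m$ are irrelevant here.

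\emph{Step~3 (triangle inequality).} Taking absolute values and using $|\overline{S_G(\chi)}| = |S_G(\chi)|$ gives
\[
|F^{\circ}_{>m}(s)| \le \frac{1}{\phi(m)}\sum_{\chi\ \text{prim.\ odd}} |B_{1,\overline{\chi}}|\,|S_G(\chi)|\,|P_{>m}(s,\chi)|,
\]
which is the claim. If desired, Corollary~\ref{cor:encoding} rewrites $|B_{1,\overline{\chi}}|$ as $(b/\pi)|L(1,\chi)|$, exhibiting the promised link between $L(1)$ and the prime sums at $s$.
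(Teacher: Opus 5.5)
Your proof is correct and follows the paper's route: expand $S^{\circ}$ over characters, discard the even and imprimitive odd coefficients, substitute the decomposition $\hat{S}^{\circ}(\chi) = -B_{1,\overline{\chi}}\,\overline{S_G(\chi)}/\phi(m)$, sum over primes $p>m$, and apply the triangle inequality. Your Step~2 goes further than the paper by actually justifying convergence of $P_{>m}(s,\chi)$ (explicit formula plus partial summation, implicitly taking $\sigma_0 \ge 1/2$, which is harmless since zeros on the critical line make the hypothesis fail for any smaller $\sigma_0$), whereas the paper only asserts that the zero-free hypothesis makes $P_{>m}(s,\chi)$ well-defined.
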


\begin{proof}
The zero-free hypothesis ensures that
$P_{>m}(s, \chi)$ is well-defined for each
primitive odd~$\chi$.
Using the transform expansion from~\cite{paperB}
and the vanishing of even and imprimitive odd
coefficients established above,
$F^{\circ}_{>m}(s) = \sum_{\chi}
\hat{S}^{\circ}(\chi)\, P_{>m}(s, \chi)$
with only primitive odd characters contributing.
The triangle inequality gives the bound.
\end{proof}

\begin{remark}
The bound relates $|L(1, \chi)|$ (from $B_1$,
at the edge of the critical strip) to
$|P(s, \chi)|$ (in the strip). Since
$P = \log L - H$ and $H$ is small for large~$b$,
this is approximately a bound involving
$|\log L(s, \chi)|$.
\end{remark}

% ============================================================
\section{The Correlation Decay}

The decomposition theorem shows what the collision
spectrum encodes. What remains is how faithfully. The partial sum
$P = \sum_{k=1}^{b-1} \overline{\chi}(k)$ has an
exact decomposition via the classical identity for
short character sums~\cite{berndt,montgomery}. After
normalizing by the Gauss sum:
\[
\widetilde{P} = L(1, \overline{\chi}) + \Delta(\chi),
\]
where the packet
\[
\Delta(\chi) = \frac{i}{\varphi(b)}
\sum_{\substack{\xi \bmod b \\ \xi(-1)=1,\;
\xi \ne 1}} \tau(\overline{\xi})\,
L(1, \xi\overline{\chi})
\]
is a sum of twisted $L$-values.

\begin{table}[h]
\centering
\begin{tabular}{rrrr}
\toprule
$b$ & mean $|\Delta|/|L|$ & std &
std $\cdot \log b$ \\
\midrule
$5$ & $0.80$ & $0.65$ & $1.05$ \\
$7$ & $1.03$ & $0.65$ & $1.26$ \\
$13$ & $1.11$ & $0.50$ & $1.28$ \\
$19$ & $1.10$ & $0.42$ & $1.23$ \\
$31$ & $1.07$ & $0.33$ & $1.12$ \\
$43$ & $1.06$ & $0.29$ & $1.09$ \\
\bottomrule
\end{tabular}
\caption{The packet $\Delta$ appears to have constant
magnitude relative to $L(1)$ (mean $\approx 1.08$)
and uniform phase
($\langle\cos\theta\rangle = 0.000$). The observed
standard deviation of $|\Delta|/|L|$ is consistent
with decay as $c/\log b$.}
\label{tab:apostol}
\end{table}

Computation using~\cite{nfield} suggests three
properties of the packet. Its magnitude relative to
$L(1)$ appears constant (mean
$|\Delta|/|L| \approx 1.08$). Its phase relative to
$L(1)$ appears uniformly distributed. And the standard
deviation of the ratio $|\Delta|/|L|$ is consistent
with decay as $c / \log b$ (Table~\ref{tab:apostol}).

This observed variance decay is consistent with the
measured correlation between
$|\widetilde{P}|$ and $|L(1)|$: at small $b$, the
high variance of $|\Delta|/|L|$ allows
$|\widetilde{P}| = |L + \Delta|$ to track $|L|$;
at large $b$, the low variance makes $|\Delta|/|L|$
nearly constant, and the uniform phase washes out
the $L$-specific information.

% ============================================================
\section{Remarks}

The decomposition theorem explains the observations
of the companion paper~\cite{paperB}: the
anti-correlation between collision coefficients and
prime character sum magnitudes, the absence of the
principal-character term, and the mod-$3$ structure
are all consequences of the factor $B_1$ in the
identity
$\hat{S}^{\circ} = -B_1 \cdot \overline{S_G}/\phi$.

The proof uses three classical ingredients (the slice
formula, the Bernoulli identity, and the vanishing of
primitive character sums over cosets) applied to a new
object (the collision invariant). The identity is
exact for every primitive odd character at every prime
base.

The analytic continuation from~\cite{paperB}
combined with the decomposition gives
$\mathcal{F}^{\circ}(s)
= -(1/\phi) \sum B_1 \cdot \overline{S_G}
\cdot [\log L(s, \chi) - H(s, \chi)]$.
Near a zero $\rho$ of $L(s, \chi_0)$, the
coefficient of the divergent term involves
$L(1, \chi_0)$ through $B_1$: the singularity at
depth $s$ is weighted by the health at $s = 1$.

What remains open is the proof of the variance decay
$\operatorname{std}(|\Delta|/|L|) \sim c/\log b$
from the Apostol formula: this would establish the
correlation decay between partial character sums and
$L$-function special values as a theorem.

% ============================================================

\end{document}